\documentclass[11pt]{article}

\usepackage{amsmath}
\usepackage{amssymb}
\usepackage{amsthm}
\usepackage{latexsym}
\usepackage{color}
\usepackage{graphicx}
\usepackage{color}
\usepackage{mathrsfs}

\DeclareSymbolFont{calletters}{OMS}{cmsy}{m}{n}
\DeclareSymbolFontAlphabet{\mathcal}{calletters}

\addtolength{\oddsidemargin}{-0.1 \textwidth}
\addtolength{\textwidth}{0.2 \textwidth}
\addtolength{\topmargin}{-0.1 \textheight}
\addtolength{\textheight}{0.2 \textheight}

\newtheorem{Theorem}{Theorem}[part]
\newtheorem{Definition}{Definition}[part]

\newtheorem{Lemma}{Lemma}[part]

\newtheorem{Remark}{Remark}[part]

\newcommand{\nc}{\newcommand}
\nc{\esssup}{\mathop{\mathrm{ess\,sup}}}
\nc{\essinf}{\mathop{\mathrm{ess\,inf}}}
\nc{\argmax}{\mathop{\mathrm{arg\,max}}}
\nc{\dint}{\displaystyle\int}


\def \R{\mathbb{R}}

\def \E{\mathbb{E}}

\def \1{\mathds{1}}

\def \Ec{{\cal E}}

\def \l({{\left (}}
\def \r){{\right )}}
\def \l[{{\left [}}
\def \r]({{\right ]}}



\newcommand{\MBFigure}[6]{
$\left. \right.$ \\
\refstepcounter{figure}
\addcontentsline{lof}{figure}{\numberline{\thefigure}{\ignorespaces #5}}
\begin{center}
\begin{minipage}{#1cm}
\centerline{\includegraphics[width=#2cm,angle=#3]{#4}}
\begin{center}
\upshape{F\textsc{ig} \normal
\end{center}
size{\thefigure}. $-$} #5
\end{center}
\label{#6}
\end{minipage}
\end{center}
$\left. \right.$ \\}
%
%
\title{Burkholder-Davis-Gundy Inequality  for g- martingale }
\author{Wahid {\sc Faidi} \thanks{University Tunis El Manar} \thanks{Shaqra University } 
\\ e-mail:  \textcolor[rgb]{0.00,0.07,1.00}{faidiwahid@su.edu.sa}
}
\begin{document}
\maketitle
\textbf{Classification:} 60E15
\\
\textbf{Keywords:}  Burkholder–Davis–Gundy; nonlinear martingale  ; BSDE .
\begin{abstract}
In this work we establish an BDG inequality type for certain nonlinear martingale arizing from BSDE. 
\end{abstract}
\section{Introduction}
The nonlinear expectation, as its name indicates, is a nonlinear generalization of the classical expectation.  It has properties in common with the latter but it differs from it especially in the property of linearity. This operator, which was introduced by Peng \cite{PPeng90}, is widely used in financial mathematics, especially in problems related to model uncertainty, such as risk assessment problems under conditions of knight uncertainty.  An important class of nonlinear expectations is that generated by the BSDE so-called $g-$expectations.
As in the case of classical expectation, a theory of nonlinear martingales has developed over the past two decades. Some generalizations of the results concerning classical martingales have been made for nonlinear martingales.
This paper is organized as follows: Section 2 provides the preliminaries, the necessary
notations, conceptions and some properties about the g-martingale.
In section 3, we further study the main problem of this paper, namely the Burkholder-Davis-Gundy (BDG in shirt) Inequality  for $g-$ martingale when $g$ is generalized Lipchitz generator. The case of a quadratic generator is studied in section 4
\section{g-Martingale  }
Let $T$ be a finite or infinite number in $\mathbb{R}_{+}$, and let $\left(B_t\right)_{t \geq 0}$ be a standard $d$-dimensional Brownian  motion defined on a completed probability space endowed with a filtration $\left(\mathscr{F}_t\right)_{t \geq 0}$ generated by this Brownian motion:
$$
\mathscr{F}_t \triangleq \sigma\left\{B_s: 0 \leq s \leq t\right\} \vee \mathcal{N}, \quad \mathscr{F}_{\infty}=\bigvee_{t \geq 0} \mathscr{F}_t
$$
where $\mathcal{N}$ is the set of all $\mathbb{P}$-null subsets.
For simplification, let $L^2\left(\Omega, \mathscr{F}_t, \mathbb{P}\right)$ be the space of all the $\mathscr{F}_t$ measurable square integral $\mathbb{R}$-valued random variables, and define the adapted process spaces as follows:
\begin{equation*}
\begin{aligned}
& \mathscr{S}^2(0, T ; \mathbb{R}):=\left\{\left(Y_t\right)_{t \in[0, T]}: \begin{array}{l}
Y \text { is the RCLL } \mathbb{R} \text {-valued process, } \\
\text { such that } \mathbb{E}\left[\sup _{0 \leq t \leq T}\left|Y_t\right|^2\right]<+\infty
\end{array}\right\} \\
& \mathscr{H}^2\left(0, T ; \mathbb{R}^d\right):=\left\{\left(Z_t\right)_{t \in[0, T]}: \begin{array}{l}
Z \text { is the adapted } \mathbb{R}^d \text {-valued process, } \\
\text { with } \mathbb{E} \int_0^T\left|Z_t\right|^2 \mathrm{~d} t<+\infty
\end{array}\right\} 
\end{aligned}
\end{equation*}

The generator $g(t, \omega, y, z):[0, T] \times \Omega \times \mathbb{R} \times \mathbb{R}^d \longmapsto \R$ is a random function which is a progressively measurable stochastic process for any $(y, z)$. We assumed that it satisfies the following  assumptions 
\begin{itemize}
\item[\textbf{(H1)}] $\left|g(t, y, z)-g\left(t, y^{\prime}, z^{\prime}\right)\right| \leq u(t)\left|y-y^{\prime}\right|+v(t)\left|z-z^{\prime}\right|$, where $u$ and $v$ are two positive functions mapping from $[0, T]$ to $\mathbb{R}_{+}$, such that $\int_0^T\left[u(t)+v^2(t)\right] \mathrm{d} t<+\infty$;
\item[\textbf{(H2)}] $g(t, y, 0)=0$, for each $y \in \mathbb{R}$, $\mathrm{d} \mathbb{P} \times \mathrm{d} t$ - a.e.
\end{itemize}
The assumption \textbf{(H1)} is a generalized Lipschitz condition, whose Lipschitz constant is replaced by two deterministic functions depending on $t$. Note that under assumptions \textbf{(H1)}-\textbf{(H2)} we have forall $(y,z)\in \mathbb{R}\times \mathbb{R}^d$,
\begin{equation*}
\mathbb{E}\left[\left(\int_0^T|g(t, y,z)| dt\right)^2\right]=\mathbb{E}\left[\left(\int_0^T|g(t, y,z)-g(t,y,0)| dt\right)^2\right] \leq \mathbb{E}\left[\left(\int_0^Tv^2(t)|z|^2 dt\right)^2\right]<+\infty
\end{equation*}
and so under assumptions \textbf{(H1)}-\textbf{(H2)}, according \cite{Chen00},  the BSDE 
\begin{equation}\label{bsde}
Y_t =\xi+\dint_t^{T}g\left(s, Y_s, Z_s\right) d s-\dint_t^{T}Z_s d B_s 
\end{equation}
admits a unique solution $(Y^{\xi},Z^{\xi}) \in \mathscr{S}^2(0, T ; \mathbb{R}) \times \mathscr{H}^2\left(0, T ; \mathbb{R}^d\right)$ for all $\xi \in  L^{2}\left(\Omega, \mathscr{F}_T, P\right)$.
\\ The operator
\begin{equation*}
\begin{split}
\Ec_g:L^{2}\left(\Omega, \mathscr{F}_T, \mathbb{P}\right)&\longmapsto \mathbb{R}
\\& \xi \longmapsto Y^{\xi}_0
\end{split}
\end{equation*}
 is a typical example of nonlinear expectation called g expectation.
 The notion of nonlinear expectation was firstly introduced  by Peng \cite{PPeng90}. It is an operator verifying a certain properties, namely
 \begin{itemize}
 \item[(i)] Strict monotonicity:
 \begin{itemize}
 \item[$\bullet$] 
  if $X_1 \geq X_2, \mathbb{P}-$ a.s., then $\mathcal{E}\left[X_1\right] \geq \mathcal{E}\left[X_2\right]$, and furthermore
 \item[$\bullet$]
  if $X_1 \geq X_2, \mathbb{P}-$ a.s., then $\mathcal{E}\left[X_1\right]=\mathcal{E}\left[X_2\right] \Longleftrightarrow X_1=X_2, \mathbb{P}-$ a.s.
\end{itemize}
\item[(ii)] preserving of constants: $\mathcal{E}[c]=c$, for each constant $c$.

 \end{itemize}

\begin{Definition}{Conditional $g$ -expectation}
\\
 The conditional $g$ -expectation of $\xi$ with respect to $\mathscr{F}_t$ is defined by
$$
\mathcal{E}_{g}\left[\xi \mid \mathscr{F}_t\right]=Y^{\xi}_t
$$
Where $(Y^{\xi}, Z^{\xi})$ is the unique solution of the BSDE (\ref{bsde}).
\\
If $\tau \leq T$ is a stopping time, we define similarly
$$
\mathcal{E}_{g}\left[X \mid \mathscr{F}_{\tau}\right]=Y^{\xi}_\tau
$$
\end{Definition}
\begin{Definition}
  $A$ process $\left(Y_{t}\right)_{0 \leq t \leq T}$ such that $E\left[Y_{t}^{2}\right]<\infty$ for all $t$ is a g-martingale (resp. $g$ -supermartingale, $g$ -submartingale) iff
$$
\mathcal{E}_{g}\left[Y_{t} \mid \mathscr{F}_{s}\right]=Y_{s}, \quad\left(\text { resp. } \leq Y_{s}, \geq Y_{s}\right), \quad \forall s \leq t \leq T
$$
\end{Definition}
\section{BDG inequality for g-martingale}
\begin{Remark}
Condition (ii) and (iii) implies 
$$\forall (t,y,z) \in \R \times \R \times  \R^{d} ; 
\left|g\left(t, y, z\right)\right|\leq  v(t)\left|z\right|$$
indeed
\begin{align*}
\left|g\left(t, y, z\right)\right|&=\left|g\left(t, y, z\right)-g\left(t, y, 0\right)\right|
\\& 
u(t)\left|y-y\right|+v(t)\left|z-0\right|= v(t)\left|z\right|
\end{align*}
\end{Remark}

\begin{Lemma}(Lenglart)
 Let $\left(X_{t}\right)_{t \geq 0}$ be a positive adapted right-continuous process dominated
by a predictable increasing process  $\left(A_{t}\right)_{t>0}$ i.e   for every bounded stopping time $\tau$, $\mathbb{E}\left(X_{\tau} \right) \leq \mathbb{E}\left(A_{\tau} \right) .$ Then, for every $k \in(0,1)$,
$$ \mathbb{E}\left(\left(\sup _{ t \geq 0} X_{t}\right)^{k}\right) \leq \frac{2-k}{1-k} \mathbb{E}\left(A_{\infty}^{k}\right)$$
\end{Lemma}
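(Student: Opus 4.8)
\medskip
\noindent\textbf{Sketch of proof.}\quad The plan is to derive the stated $L^{k}$ estimate from a weak-type (distributional) form of the domination inequality, followed by the layer-cake formula with a suitably chosen cut-off level. Throughout set $X^{*}:=\sup_{t\geq 0}X_{t}$, which is a genuine random variable since $X$ is right-continuous.

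\emph{Step 1 (a weak-type inequality).} I would first prove that for all $c,d>0$,
$$\mathbb{P}\big(X^{*}>c\big)\;\leq\;\frac{1}{c}\,\mathbb{E}\big[A_{\infty}\wedge d\big]\;+\;\mathbb{P}\big(A_{\infty}\geq d\big).$$
Introduce the stopping times $T:=\inf\{t\geq 0:\,X_{t}>c\}$ and $R:=\inf\{t\geq 0:\,A_{t}\geq d\}$. Since $A$ is increasing, $\{R<\infty\}\subseteq\{A_{\infty}\geq d\}$, so it suffices to bound $\mathbb{P}(X^{*}>c,\,R=\infty)$ by $c^{-1}\mathbb{E}[A_{\infty}\wedge d]$. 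Because $A$ is predictable, $\{A\geq d\}$ is a left-closed predictable stochastic interval, hence $R$ is a predictable stopping time; fix an announcing sequence $R_{m}\uparrow R$ with $R_{m}<R$, so that $A_{R_{m}}<d$. As $A$ is increasing we obtain, for all $m,n$, the pointwise bound $A_{T\wedge R_{m}\wedge n}\leq A_{R_{m}\wedge n}\leq d\wedge A_{\infty}$. Applying Markov's inequality and then the domination hypothesis to the bounded stopping time $\tau_{m,n}:=T\wedge R_{m}\wedge n$ yields
$$\mathbb{P}\big(X_{\tau_{m,n}}\geq c\big)\;\leq\;\frac{1}{c}\,\mathbb{E}\big[X_{\tau_{m,n}}\big]\;\leq\;\frac{1}{c}\,\mathbb{E}\big[A_{\tau_{m,n}}\big]\;\leq\;\frac{1}{c}\,\mathbb{E}\big[d\wedge A_{\infty}\big].$$
On $\{X^{*}>c,\,R=\infty\}$ one has $T<\infty$, $X_{T}\geq c$ by right-continuity, and $\tau_{m,n}=T$ once $m,n$ are large enough; hence this event is contained in $\liminf_{n}\liminf_{m}\{X_{\tau_{m,n}}\geq c\}$, and applying Fatou's lemma (for events) twice gives the claimed weak-type bound.

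\emph{Step 2 (integration).} By the layer-cake identity $\mathbb{E}[(X^{*})^{k}]=\int_{0}^{\infty}kc^{k-1}\,\mathbb{P}(X^{*}>c)\,dc$ together with Step 1 applied with $d=c$,
$$\mathbb{E}\big[(X^{*})^{k}\big]\;\leq\;\int_{0}^{\infty}kc^{k-2}\,\mathbb{E}\big[A_{\infty}\wedge c\big]\,dc\;+\;\int_{0}^{\infty}kc^{k-1}\,\mathbb{P}\big(A_{\infty}\geq c\big)\,dc.$$
The second term equals $\mathbb{E}[A_{\infty}^{k}]$ by layer-cake again. For the first, Fubini together with splitting the $c$-integral at $c=A_{\infty}$ gives $\int_{0}^{A_{\infty}}kc^{k-1}\,dc+A_{\infty}\int_{A_{\infty}}^{\infty}kc^{k-2}\,dc=A_{\infty}^{k}+\tfrac{k}{1-k}A_{\infty}^{k}=\tfrac{1}{1-k}A_{\infty}^{k}$, the integral at $+\infty$ converging precisely because $k<1$; taking expectations, this term is $\tfrac{1}{1-k}\mathbb{E}[A_{\infty}^{k}]$. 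Adding the two contributions,
$$\mathbb{E}\big[(X^{*})^{k}\big]\;\leq\;\Big(\tfrac{1}{1-k}+1\Big)\mathbb{E}\big[A_{\infty}^{k}\big]\;=\;\frac{2-k}{1-k}\,\mathbb{E}\big[A_{\infty}^{k}\big],$$
which is the assertion (if $\mathbb{E}[A_{\infty}^{k}]=+\infty$ there is nothing to prove).

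\emph{Main obstacle.} The delicate part is Step 1. The domination hypothesis is assumed only for \emph{bounded} stopping times, so one must reach the event $\{X^{*}>c\}$ through the truncated times $\tau_{m,n}$ and a Fatou argument, while carefully distinguishing $\{X^{*}>c\}$ from $\{X_{\tau_{m,n}}\geq c\}$ via the right-continuity of $X$. Crucially, the predictability of $A$ enters exactly here: through an announcing sequence for $R$ it guarantees that $A$ does not overshoot the level $d$ at the stopping time actually used — without it only a weaker inequality carrying an extra jump term would survive. Step 2, by contrast, is a routine computation, and the precise constant $\tfrac{2-k}{1-k}$ drops out automatically from the choice $d=c$.
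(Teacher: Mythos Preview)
Your proof is correct and is essentially the classical argument due to Lenglart. Note, however, that the paper does not give its own proof of this lemma: it is simply stated with attribution and the reference \cite{lenglart77}, and then invoked as a tool in the proof of the main theorem. So there is nothing to compare against beyond the original source, and your two-step route (weak-type estimate via the predictable hitting time of $A$ plus layer-cake integration with the choice $d=c$) is exactly the standard derivation that yields the sharp constant $\tfrac{2-k}{1-k}$.
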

\begin{Theorem}
For any $1 \leq p <+\infty$, there exists two constants $c^g_p$ and $C^g_p$ such that for all  $g$-martingale $Y$ vanishing at zero;$$ c^g_p \E[\langle Y \rangle^{\frac{p}{2}}_T] \leq \E[ (Y^*)^p_\infty ]\leq C^g_p \E[\langle Y \rangle^{\frac{p}{2}}_T]$$
\end{Theorem}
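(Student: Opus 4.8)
The plan is to use the BSDE representation underlying the notion of a $g$-martingale. If $Y$ is a $g$-martingale on $[0,T]$ with $Y_0=0$, put $\xi:=Y_T$ and let $(\widehat Y,\widehat Z)\in\mathscr S^2\times\mathscr H^2$ be the unique solution of (\ref{bsde}); the $g$-martingale property and uniqueness force $\widehat Y_t=\mathcal E_g[Y_T\mid\mathscr F_t]=Y_t$ for every $t$, so, writing $Z:=\widehat Z$,
$$Y_t=-\int_0^t g(s,Y_s,Z_s)\,ds+\int_0^t Z_s\,dB_s=:-A_t+M_t ,$$
where $M$ is a continuous $L^2$-martingale and $A$ is absolutely continuous, whence $\langle Y\rangle_t=\langle M\rangle_t=\int_0^t|Z_s|^2\,ds$. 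The estimate of the Remark together with the Cauchy–Schwarz inequality gives the fundamental control of the finite-variation part,
$$A^*_\infty:=\sup_{0\le t\le T}|A_t|\ \le\ \int_0^T v(s)\,|Z_s|\,ds\ \le\ V^{1/2}\,\langle Y\rangle_T^{1/2},\qquad V:=\int_0^T v^2(s)\,ds<+\infty .$$

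For the upper inequality, from $Y^*_\infty\le M^*_\infty+A^*_\infty\le M^*_\infty+V^{1/2}\langle Y\rangle_T^{1/2}$, the elementary bound $(x+y)^p\le 2^{p-1}(x^p+y^p)$, and the classical (linear) Burkholder–Davis–Gundy inequality $\E[(M^*_\infty)^p]\le\Lambda_p\,\E[\langle M\rangle_T^{p/2}]$ applied to the continuous martingale $M$, one gets
$$\E[(Y^*_\infty)^p]\ \le\ 2^{p-1}\big(\Lambda_p+V^{p/2}\big)\,\E[\langle Y\rangle_T^{p/2}] ,$$
so one may take $C^g_p=2^{p-1}(\Lambda_p+V^{p/2})$. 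The classical scalar BDG bound used here is itself obtained, in the range $0<p<2$, from Lenglart's Lemma applied to $X_t=|M_t|^2$ dominated by $A_t=\langle M\rangle_t$, and from Doob's inequality and Itô's formula for $p\ge2$.

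For the lower inequality I would apply Itô's formula to $|Y_t|^2$; since $Y_0=0$,
$$\langle Y\rangle_T\ =\ |Y_T|^2+2\int_0^T Y_s\,g(s,Y_s,Z_s)\,ds-2\int_0^T Y_s Z_s\,dB_s .$$
Bounding $|Y_T|^2\le(Y^*_\infty)^2$, using the Remark and Young's inequality to get $\big|2\int_0^T Y_s g(s,Y_s,Z_s)\,ds\big|\le 2\,Y^*_\infty V^{1/2}\langle Y\rangle_T^{1/2}\le\tfrac12\langle Y\rangle_T+2V(Y^*_\infty)^2$, and writing $N_t:=\int_0^t Y_s Z_s\,dB_s$ (so that $\langle N\rangle_T=\int_0^T|Y_s|^2|Z_s|^2\,ds\le(Y^*_\infty)^2\langle Y\rangle_T$), one is led to $\langle Y\rangle_T\le 2(1+2V)(Y^*_\infty)^2+4\,N^*_\infty$. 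Raising to the power $p/2$ via $(x+y)^{p/2}\le\kappa_p(x^{p/2}+y^{p/2})$, taking expectations, applying the classical BDG inequality to $N$ with exponent $p/2$ and then Cauchy–Schwarz gives $\E[(N^*_\infty)^{p/2}]\le\Lambda_{p/2}\,\E[\langle N\rangle_T^{p/4}]\le\Lambda_{p/2}\,\E[(Y^*_\infty)^p]^{1/2}\,\E[\langle Y\rangle_T^{p/2}]^{1/2}$, hence, with $a:=\E[(Y^*_\infty)^p]$ and $b:=\E[\langle Y\rangle_T^{p/2}]$, an estimate of the form $b\le\alpha a+\beta\,a^{1/2}b^{1/2}$. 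A final use of Young's inequality, $\beta a^{1/2}b^{1/2}\le\tfrac12 b+\tfrac12\beta^2 a$, absorbs the cross term and yields $b\le(2\alpha+\beta^2)a$, i.e. one may take $c^g_p:=(2\alpha+\beta^2)^{-1}$.

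The one genuinely delicate point is that this last absorption requires knowing a priori that $b=\E[\langle Y\rangle_T^{p/2}]<+\infty$, which is not granted by $(Y,Z)\in\mathscr S^2\times\mathscr H^2$ when $p>2$. I would resolve it by the customary localization: stop at $\tau_n:=\inf\{t\ge0:|Y_t|\vee\langle Y\rangle_t\ge n\}\wedge T$, run the whole chain of inequalities for the bounded stopped process $Y^{\tau_n}$ with constants independent of $n$ (legitimate because $V$ is a fixed deterministic constant), and let $n\to\infty$ using $\langle Y\rangle_{\tau_n}\uparrow\langle Y\rangle_T$, $Y^*_{\tau_n}\uparrow Y^*_\infty$ and monotone convergence; if $\E[(Y^*_\infty)^p]=+\infty$ the lower inequality is trivial, and otherwise the passage to the limit also shows $\E[\langle Y\rangle_T^{p/2}]<+\infty$. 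This localization, rather than any single step, is the main obstacle; everything else reduces to the triangle and Cauchy–Schwarz inequalities and to the classical scalar BDG inequality, for which Lenglart's Lemma is exactly the tool needed when $0<p<2$.
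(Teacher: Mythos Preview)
Your argument for the \emph{lower} inequality is essentially the paper's own: both apply It\^o's formula to $|Y|^2$, bound the drift via $|g(s,y,z)|\le v(s)|z|$, raise to the power $p/2$, invoke the classical BDG inequality for the stochastic integral $\int Y_sZ_s\,dB_s$, use Cauchy--Schwarz, absorb the cross term, and pass to the limit through a localization by stopping times. The only cosmetic difference is where Young's inequality is applied (pointwise in the paper, at the level of $\langle Y\rangle_T$ in your version).

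Your \emph{upper} inequality, however, follows a genuinely different and more elementary route. The paper does not decompose $Y$ as $-A+M$; instead it applies It\^o's formula to $|Y_t|^q$ for $q\ge2$, observes that $|Y_t|^q$ is dominated (in Lenglart's sense) by an increasing process built from $\int|Y|^{q-1}v|Z|\,ds$ and $\int|Y|^{q-2}|Z|^2\,ds$, invokes Lenglart's Lemma with an exponent $k\in(0,1)$, and then uses a Young/H\"older manipulation together with the choice $p=qk$ and a free parameter $\delta$ to close the estimate. Your approach bypasses all of this: since $A^*_\infty\le V^{1/2}\langle Y\rangle_T^{1/2}$ by Cauchy--Schwarz and $\langle Y\rangle=\langle M\rangle$, a single triangle inequality and the classical BDG inequality for $M$ suffice. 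What you gain is brevity and transparency, with an explicit constant $C^g_p=2^{p-1}(\Lambda_p+V^{p/2})$; what the paper's argument buys is that Lenglart's Lemma is doing the work of the classical BDG inequality internally rather than being invoked as a black box, so the proof is closer to self-contained.
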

\begin{proof}

We start by proving the left hand side inequality. 
\\
For each integer $n \geqslant 1$, let us introduce the stopping time
$$
\tau_{n}=\inf \left\{t \in[0, T], \int_{0}^{t}\left|Z_{r}\right|^{2} \mathrm{~d} r \geqslant n\right\} \wedge T
$$
Itô's formula gives us
$$
\int_{0}^{\tau_{n}}\left|Z_{s}\right|^{2} ds=\left|Y_{\tau_{n}}\right|^{2}+ \int_{0}^{\tau_{n}} 2Y_{s} g\left(s, Y_{s}, Z_{s}\right)ds -2 \int_{0}^{\tau_{n}} Y_{s} Z_{s} \mathrm{~d} B_{s}
$$
But, from the assumption on $g,$ we have  $g\left(s, y, z\right) \leqslant v(s)|z|$, and so

$$
2| yg(s, y, z)| \leqslant 2v^2(s) |y|^{2}+\dfrac{1}{2}|z|^{2} 
$$
Thus, since $\tau_{n} \leqslant T,$ we deduce that
$$
\frac{1}{2} \int_{0}^{\tau_{n}}\left|Z_{s}\right|^{2} \mathrm{~d} s \leqslant Y_{*}^{2}+2 \mu Y_{*}^{2} +2\left|\int_{0}^{\tau_{n}} Y_{s} Z_{s} \mathrm{~d} B_{s}\right| .
$$
Where $\mu:=\dint_0^T v^2(s)ds$. It follows that
$$
\int_{0}^{\tau_{n}}\left|Z_{s}\right|^{2} \mathrm{~d} s \leqslant (2+4\mu )Y_{*}^{2} +4\left|\int_{0}^{\tau_{n}} Y_{s} Z_{s} \mathrm{~d} B_{s}\right| .
$$
and thus that
\begin{equation*}
\left(\int_{0}^{\tau_{n}}\left|Z_{s}\right|^{2} \mathrm{~d} s\right)^{p / 2} \leqslant k_{p}\left(Y_{*}^{p}+\left|\int_{0}^{\tau_{n}} Y_{s} Z_{s} dB_{s}\right|^{p / 2} \right)
\end{equation*}
Hence
\begin{equation}\label{ineq1}
\mathbb{E}\left[\left(\int_{0}^{\tau_{n}}\left|Z_{s}\right|^{2} \mathrm{~d} s\right)^{p / 2}\right] \leqslant k_{p}\left(\mathbb{E}\left[Y_{*}^{p}\right]+\mathbb{E}\left[\left|\int_{0}^{\tau_{n}} Y_{s} Z_{s} dB_{s}\right|^{p / 2} \right]\right)
\end{equation}
But by the BDG inequality, we get
$$
\begin{aligned}
\mathbb{E}\left[\left|\int_{0}^{\tau_{n}} Y_{s} Z_{s} dB_{s}\right|^{p / 2}\right] 
& \leqslant 
c_{p} \mathbb{E}\left[\left(\int_{0}^{\tau_{n}}\left|Y_{s}\right|^{2}\left|Z_{s}\right|^{2}ds\right)^{p/4}\right] \\
& \leqslant c_{p} \mathbb{E}\left[Y_{*}^{p / 2}\left(\int_{0}^{\tau_{n}}\left|Z_{s}\right|^{2} ds\right)^{p / 4}\right]
\end{aligned}
$$
$$
\mathbb{E}\left[\left|\int_{0}^{\tau_{n}} Y_{s} Z_{s} dB_{s}\right|^{p / 2}\right] \leqslant \frac{c_{p}^{2}}{2} \mathbb{E}\left[Y_{*}^{p}\right]+\frac{1}{2} \mathbb{E}\left[\left(\int_{0}^{\tau_{n}}\left|Z_{s}\right|^{2} ds\right)^{p / 2}\right]
$$
and so , we get, for each $n \geqslant 1$
$$
\mathbb{E}\left[\left|\int_{0}^{\tau_{n}} Y_{s} Z_{s} dB_{s}\right|^{p / 2}\right] \leqslant c_{p}^{2} \mathbb{E}\left[Y_{*}^{p}\right]
$$
Plugging the last inequality in inequality (\ref{ineq1}) we obtain, for each $n \geqslant 1$
$$
\mathbb{E}\left[\left(\int_{0}^{\tau_{n}}\left|Z_{s}\right|^{2} \mathrm{~d} s\right)^{p / 2}\right] \leqslant d_{p} \mathbb{E}\left[Y_{*}^{p}\right]
$$
Fatou's lemma implies that
$$
\mathbb{E}\left[\left(\int_{0}^{T}\left|Z_{s}\right|^{2} ds\right)^{p/2}\right] \leqslant c_{p}^{2} \mathbb{E}\left[Y_{*}^{p}\right]
$$
We proceed now to the proof of the right hand side inequality.
By stopping it is enough to prove the result for bounded $Y$. Let $q \geq 2 .$ From Itô's formula we have
\begin{align*}
d\left|Y_{t}\right|^{q}&
=q\left|Y_{t}\right|^{q-1} \operatorname{sgn}\left(Y_{t}\right) d Y_{t}+\frac{1}{2} q(q-1)\left|Y_{t}\right|^{q-2} d\langle Y\rangle_{t}
\\&
=q \operatorname{sgn}\left(Y_{t}\right)\left|Y_{t}\right|^{q-1} (-g(t, Y_t, Z_t)dt +Z_t d B_{t})+\frac{1}{2} q(q-1)\left|Y_{t}\right|^{q-2} Z_t^2dt
\\&
=-q \operatorname{sgn}(Y_{t})\left|Y_{t}\right|^{q-1} g(t, Y_t, Z_t)dt 
+\frac{1}{2} q(q-1)\left|Y_{t}\right|^{q-2} Z_t^2dt
+q \operatorname{sgn}\left(Y_{t}\right)\left|Y_{t}\right|^{q-1} Z_t d B_{t}
\end{align*}
\begin{align*}
|Y_{t}|^{q}
=\int_0^t-q \operatorname{sgn}(Y_{s})\left|Y_{s}\right|^{q-1} g(t, Y_s, Z_s)dt 
+\frac{1}{2} q(q-1)\left|Y_{s}\right|^{q-2} Z_s^2ds
+\int_0^t q \operatorname{sgn}\left(Y_{s}\right)\left|Y_{s}\right|^{q-1} Z_s d B_{s}
\end{align*}
\begin{align*}
\mathbb{E}\left(\left|Y_{t}\right|^{q} \mid \mathcal{F}_{0}\right)& \leq
\mathbb{E}\left(\int_0^t-q \operatorname{sgn}(Y_{s})\left|Y_{s}\right|^{q-1} g(t, Y_s, Z_s)ds 
+\frac{1}{2} q(q-1)\left|Y_{s}\right|^{q-2} Z_s^2ds\mid \mathcal{F}_{0}\right)
\\&
\mathbb{E}\left(\int_0^t q v(s) \left|Y_{s}\right|^{q-1} |Z_s|ds 
+\frac{1}{2} q(q-1)\left|Y_{s}\right|^{q-2} Z_s^2ds\mid \mathcal{F}_{0}\right)
\end{align*}
From the Lenglart's domination inequality, we deduce then that for every $k \in(0,1)$,
\begin{align*}
\mathbb{E}\left(\left(\sup _{0 \leq t \leq T}\left|Y_{t}\right|^{q}\right)^{k}\right) &\leq \frac{2-k}{1-k}\mathbb{E}\left(\int_0^T q v(s) \left|Y_{s}\right|^{q-1} |Z_s|ds 
+\frac{1}{2} q(q-1)\left|Y_{s}\right|^{q-2} Z_s^2ds\right)^k
\\
& \leq 
\frac{2-k}{1-k}\mathbb{E}\left(\int_0^T q  \left|Y_{s}\right|^{q-2} (\dfrac{\delta^2}{2}|Z_s|^2+\frac{v^2(s)}{2\delta^2}|Y_s|^2)+\frac{1}{2} q(q-1)\left|Y_{s}\right|^{q-2} Z_s^2ds\right)^k
\\
&=
\frac{2-k}{1-k}\mathbb{E}\left(\int_0^T \frac{q v^2(s)}{2\delta^2} \left|Y_{s}\right|^{q}  
+(\frac{1}{2} q(q-1+ \delta^2)) \left|Y_{s}\right|^{q-2} Z_s^2ds\right)^k
\\
& \leq 
\frac{2-k}{1-k}\mathbb{E}\left(\int_0^T \frac{q v^2(s)}{2\delta^2} \left|Y_{s}\right|^{q}ds\right)^k  
+\frac{2-k}{1-k}\mathbb{E}\left(\int_0^T (\frac{1}{2} q(q-1+ \delta^2)) \left|Y_{s}\right|^{q-2} Z_s^2ds\right)^k
\\& \leq 
\frac{2-k}{1-k}(\frac{q \mu}{2\delta^2})^{k}\mathbb{E}\left(\left(\sup _{0 \leq t \leq T}\left|Y_{t}\right|^{q}\right)^{k}\right)
\\&+\frac{2-k}{1-k}(\frac{1}{2} q(q-1+\delta^2)^k\mathbb{E}\left(\left(\sup _{0 \leq t \leq T}\left|Y_{t}\right|^{k(q-2)}\right)\left(\int_0^T   Z_s^2ds\right)^k\right)
\end{align*}
Where $\delta$  is a strictly positive constant. Therefore
\begin{align*}
(1-\frac{2-k}{1-k}(\frac{q \mu}{2\delta^2})^{k})\mathbb{E}\left(\left(\sup _{0 \leq t \leq T}\left|Y_{t}\right|^{q}\right)^{k}\right) &\leq 
\frac{2-k}{1-k}(\frac{1}{2} q(q-1+ \delta^2)^k\mathbb{E}\left(\left(\sup _{0 \leq t \leq T}\left|Y_{t}\right|^{k(q-2)}\right)\left(\int_0^T   Z_s^2ds\right)^k\right)
\end{align*}
By Holder inequality we obtain
\begin{align*}
(1-\frac{2-k}{1-k}(\frac{q \mu}{2\delta^2})^{k})\mathbb{E}\left((\sup _{0 \leq t \leq T}\left|Y_{t}\right|)^{qk}\right) &\leq 
\frac{2-k}{1-k}(\frac{1}{2} q(q-1+ \delta^2)^k
\left(\mathbb{E}(\sup _{0 \leq t \leq T}\left|Y_{t}\right|)^{kq}\right)^{1-\frac{2}{q}}
\\&
\left(\mathbb{E}\left(\int_0^T Z_s^2ds\right)^\frac{kq}{2}\right)^{\frac{2}{q}}
\end{align*}
By by choosing $\delta$ large enough such that $\kappa= 1-\frac{2-k}{1-k}(\frac{q \mu}{\delta^2}T)^{k}>0$ and taking $p=qk$, we obtain
\begin{align*}
\mathbb{E}\left((\sup _{0 \leq t \leq T}\left|Y_{t}\right|)^{p}\right) &\leq 
\frac{2-k}{\kappa(1-k)}(\frac{1}{2} q(q-1)+q\mu \delta^2)^k
\left(\mathbb{E}\int_0^T Z_s^2ds\right)^\frac{p}{2}
\end{align*}
The result follows.
\end{proof}
\section{Quadratic generator case}
In this paragraph, we show using a counterexample that the previous inequality is no longer valid in the quadtratic case.
\\
For $n \in \mathbb{N}$, let $Y^n$ the stochastic processes defined by 
$$Y^n_t=nB_t-n^2t; 0\leq t \leq T$$
It's clear that for all $n \in \mathbb{N}$; the pair $(Y^n,n)$ is solution of the quadratic BSDE
$$dY_t=-Z_t^2dt+ Z_t dB_t; Y_T=nB_T-n^2T$$
Therefore for all $n \in \mathbb{N}$, $Y^n$ is a $g-$martingale with $g(z)=-z^2$
\\
If the BDG inequality  holds for $Y$ we will have
$$
|\E(Y^n_T)|\leq \E[\sup_{0\leq t \leq T}|Y_t^n|] \leq C(T)\E[\langle Y^n\rangle_T^\frac{1}{2}]
$$
That's means, for all $n \in \mathbb{N}$
$$
n^2T \leq n C(T)\sqrt{T}
$$
Which is absurd.

\end{document}